\documentclass[a4paper,12pt]{amsart} 

\usepackage{amssymb,amsmath}
\usepackage{amscd}
\usepackage{amsthm}
\usepackage{mathrsfs}

\newtheorem{definition}{Definition}[section]
\newtheorem{theorem}{Theorem}[section]

\newtheorem{proposition}{Proposition}[section]
\newtheorem{cor}{Corollary}[section]

\def\bbr{\mathbb{R}}

\def\bbz{\mathbb{Z}}

\def\scr#1{\mathscr{#1}}

\def\cf{\mathscr{F}}
\def\ch{\mathscr{H}}
\def\ci{{I}}
\def\cj{{J}}
\def\cp{\mathscr{P}}

\def\cm{\mathscr{M}}
\def\cn{\mathscr{N}}

\def\<{{\langle}}
\def\>{\rangle}

\def\iff{\longleftrightarrow}
\def\then{\longrightarrow}


\title{Group actions on Polish spaces}
\author{Robert Ra{\l}owski and Szymon {\.Z}eberski}\thanks{The work has been partially financed by NCN means granted by decision DEC-2011/01/B/ST1/01439 and by grant { 8211204601, 8211104160, MPK: 9120730000} from the Faculty of Pure and Applied Mathematics, Wrocław University of Science and Technology.}
\address{ Faculty of Pure and Applied Mathematics,
         Wroc{\l}aw University of Science and Technology, Wybrze\.ze Wyspia\'n\-skie\-go 27, 
         50-370 Wroc{\l}aw, Poland.}

\email[Robert Ra{\l}owski]{robert.ralowski@pwr.edu.pl}
\email[Szymon \.Zeberski]{szymon.zeberski@pwr.edu.pl}

\subjclass{03E35, 03E75}
\keywords{Polish ideal space, Polish ideal group, nonmeasurable set, completely nonmeasurable set. }

\date{}

\begin{document}

\hyphenation{non-mea-su-ra-ble}

\maketitle

\begin{abstract} In this paper we investigate the action of Polish groups (not necessary abelian) on an uncountable Polish spaces. We consider two main situations. First, when the orbits given by group action are small and the second when the family of orbits are at most countable. We have found  some subgroups which are not measurable with respect to a given $\sigma$-ideals on the group and the action on  some subsets gives a completely nonmeasurable sets with respect to some $\sigma$-ideals with a Borel base on the Polish space.  In most cases the general results are consistent with ZFC theory and are strictly connected with cardinal coefficients. We give  some suitable examples, namely the subgroup of isometries of the Cantor space where the orbits are sufficiently small. In the opposite case we give an example of the group of the homeomorphisms of a Polish space in which there is a large orbit and we have found the subgroup without  Baire property and a subset of the mentioned space such that the action of this subgroup on this set is completely nonmeasurable set with respect to the $\sigma$-ideal of the subsets of first category.
\end{abstract}

\section{Notation and Terminology}

In this paper we will use standard set theoretic notation following \cite{Jech}. In particular, for any set $X$ and any cardinal $\kappa$ we denote by $[X]^{<\kappa}$  the family of all subsets of $X$ with size less than $\kappa$. Moreover, $\scr{P}(X)$ denotes the power set of $X$.

Let $X$ be any uncountable Polish space. By $\scr{B}(X)$ we denote the $\sigma$-algebra of all Borel subsets of $X$.

$I\subseteq \cp(X)$ is an ideal on $X$ if $I$ is closed under taking subsets and under finite unions. We assume that $I$ is nontrivial meaning that $I\neq \emptyset$ and $X\notin I.$ If for every sequence $(A_n)_{n\in\omega}$ of elements of $I$ we have that $\bigcup_{n\in\omega}A_n\in I$ then $I$ is called a $\sigma$-ideal.

Let us recall that $\scr{F}\subset I$ is a base of $I$ iff 
$$
(\forall Y\in I)(\exists B\in\scr{F})\;\; Y\subset B.
$$
If, additionally, $\scr{F}$ consists of Borel sets then we say that $\scr{F}$ is a Borel base of $I.$

We will use the following cardinal coefficients:
\begin{definition}Let $I\subset \scr{P}(X)$ be a $\sigma$-ideal on a Polish space $X$. Assume that $I$ has a Borel base. Set
$$
cov(I)=min\{ |\scr{A}|:\; \scr{A}\subset I\land \bigcup \scr{A} = X\},
$$
$$
cov_h(I)=min\{ |\scr{A}|:\; (\scr{A}\subset I)\land (\exists B\in \scr{B}(X)\setminus I)\;(\bigcup \scr{A}= B)\},
$$
$$
cof(I)=min\{ |\scr{A}|:\; \scr{A}\subset I\land \scr{A}\text{ is a  Borel base of } I\},
$$
$$
non(I) = min\{ |A|:\; A\subseteq X \land  A\notin I \}.
$$
\end{definition}

\begin{definition}\label{pis} We say that a pair $(X,I)$ is a Polish ideal space iff
\begin{itemize}
 \item $X$ is an uncountable Polish space,
 \item $I\subset\scr{P}(X)$ is a $\sigma$-ideal containing singletons and having a Borel base.
\end{itemize}

\end{definition}

Let us remark that a classical example of a Polish ideal space is $(X,\cm)$, where $X$ is an uncountable Polish space without isolated points and $\cm$ stands for the $\sigma$-ideal of meager sets, i.e. $\sigma$-ideal generated by closed nowhere dense sets. 

Another classical example is $\sigma$-ideal $\cn$ of the null subsets of $X,$ where $X$ is $\bbr, [0,1], 2^\omega.$ If $X$ is $\bbr$ or $[0,1]$ we consider the Lebesgue measure and if $X$ is $2^\omega$ we consider the Haar measure.

The cardinal coefficients connected to $\cm$ and $\cn$ do not depend on $X$ (see e.g. \cite{Kechris}). Moreover $cov(\cm)=cov_h(\cm),$ $cov(\cn)=cov_h(\cn).$

By $\scr{B}^+_I(X)$ we denote the set $\scr{B}(X)\setminus I$ of all $I$-positive Borel subsets of the space $X$.
Let us remark that if $I$ is the collection of all Lebesgue null subsets of $\bbr$ then $\scr{B}_I^+(\bbr)$ denotes the collection of all Borel sets of positive measure.

Let us notice that similarly to the context of ideals we can define a base of $\scr{B}^+_I(X).$ In this case we say that $\scr{F}\subseteq \scr{B}^+_I(X)$ is a base if 
$$
(\forall A\in\scr{B}^+_I(X))(\exists B\in\scr{F})( B\subseteq A).
$$

Let us define
$$
cof(\scr{B}^+_I(X))=min\{ |\scr{A}|:\; \scr{A}\subset \scr{B}^+_I(X) \land \scr{A}\text{ is a base of } \scr{B}^+_I(X)\}.
$$

Any base of $\scr{B}^+_I(X)$ consists of Borel sets, so it is always of size smaller or equal to continuum. It shows that $cof(\scr{B}^+_I(X)) \le \mathfrak{c}$.

Let us remark that $cof(\cn) = cof(\scr{B}^+_\cn)(X)$ and $cof(\cm) = cof(\scr{B}^+_\cm)(X)$, see e.g. \cite{CKP} \cite{z1}.

\begin{definition}
	Let $(X,I)$ be a Polish ideal space. Assume that $C\subseteq  X.$
	We say that $C$ is $I$-nonmeasurable iff $C$ does not belong to the $\sigma$-field genereated by Borel subsets of $X$ and the $\sigma$-ideal $I$. 
\end{definition}

\begin{definition}
Let $(X,I)$ be a Polish ideal space. Assume that $C\subseteq D\subseteq X.$
We say that $C$ is completely $I$-nonmeasurable in $D$ iff
$$(\forall B\in\scr{B}^+_I(X))\left(B\cap D\notin I\then (B\cap C\notin I\wedge B\cap(D\setminus C)\notin I)\right).$$ 
\end{definition}

Let us remark that this notion was studied e.g. in papers \cite{rz1}, \cite{rz2}, \cite{cmrrz}, \cite{rsz}, \cite{z}.

\begin{definition} 
We say that a pair $(G,\cdot)$ is a Polish group if $(G,\cdot)$ is a topological group (multiplication and inverse are continuous) and $G$ with its topology is a Polish space. Neutral element of the group $(G,\cdot)$ will be denoted by $e$. If $X$ is any set and $(G,\cdot)$ is a group then a mapping $F: G\times X\to X$ is called an  action of $G$ on $X$ if the following conditions are fulfilled:
\begin{enumerate}
 \item $(\forall x\in X) F(e,x)=x$,
 \item $(\forall x\in X)(\forall g,h\in G)\;\; F(g,F(h,x))=F(gh,x)$.
\end{enumerate}
For the reader's convenience $F(g,x)$ is denoted by $gx$ and $(G,\cdot)$ by $G$.
\end{definition}

In the same fashion as in Definition \ref{pis} we define the notion of a Polish ideal group.
\begin{definition} We say that a triple $(G,\cdot, \cj)$ is a Polish ideal group if $(G,J)$ is a Polish ideal space and $(G,\cdot)$ is a Polish group.
\end{definition}

 \begin{definition} Let $G$ be any group and $X$  any set. Let $A\subset X$ be any subset of the $X.$ The set
$$
GA=\{ gx\in X:\;\; (g,x)\in G\times A\}
$$
is called the orbit of the set $A$ by the group $G$. Whenever $A=\{ x\}$ is a singleton, we will write $G{x}$ instead of $G{\{ x\}}$ for convenience.
\end{definition}

Let us notice that the family $\{ Gx: x\in X\}$ forms a partition of $X$. Each set of the form $Gx$ can be viewed as an abstract class with respect to the following equivalence relation:
$$
 (x\sim y)\iff (\exists g\in G)\; (y=gx)
$$

Let $A\subset G$ be any subset of the group $G.$ Then $\< A\>_G$ denotes the subgroup of $G$ generated by the set $A$. Let us observe that for any nonempty subset $A\subset G$ we have
$$
\< A \>_G= \bigg\{ \prod_{i\in n} f(i)^{g(i)}\in G:\;\; n\in\omega\land f\in A^n\land g\in \bbz^n\bigg\}.
$$
Here $h^0=e\in G$ whenever $h\in G$.

\section{Results }
The main inspiration of our paper is the question posed by Y. Kuznetsova \cite{Kuz}. She asked if for every null subset $A\subseteq \bbr$ there exists a subset $S\subseteq \bbr$ such that $A+S$ is nonmeasurable.

Z. Kostana in \cite{Kostana} proved that  for every meager subset $A\subseteq \bbr$ there exists a subset $S\subseteq \bbr$ such that $A+S$ does not have the Baire property.

We will consider similar problems for the wider class of Polish groups, not only $(\bbr,+).$ In most obtained results we used set-theoretical assumptions.

First, let us consider the situation when the set of orbits of singletons is quite large.
\begin{theorem} Let $(X,I)$ be a Polish ideal space and $(G,\cdot)$ an uncountable Polish group acting on $X.$ Assume that the following condition is fulfilled:
$$
(\forall B\in \scr{B}_I^+(X))\; cof(\scr{B}_I^+(X))\le |\{ Gb:\; b\in B\}|.
$$
Then there exists a  subset $A\subset X$ such that $A$ and $GA$ are completely $I$-nonmeasurable subsets of $X$. Moreover if $(G,J)$ is a Polish ideal space and there exist Borel bases $\scr{B}_G\subset \scr{B}_J^+(G)$ and $\scr{B}_X\subset \scr{B}_I^+(X)$ such that for every $B\in \scr{B}_I^+(X)$
$$
|\scr{B}_G|=|\scr{B}_X|\le |\{ Gb:\; b\in B\}|,
$$
then there exists a completely $J$-nonmeasurable subgroup $H$ in $G$ such that $A$ and $HA$ are completely $\ci$-nonmeasurable in $X.$
\end{theorem}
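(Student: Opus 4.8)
The plan is to reduce complete nonmeasurability to a Bernstein-type ``meeting'' condition, and then to run a single transfinite recursion that builds $A$, the subgroup $H$, and an auxiliary $G$-invariant ``reservoir'' $R$ simultaneously. First I would record the reduction. Fix a Borel base $\{C_\alpha:\alpha<cof(I)\}$ of $I$, so that a set lies in $I$ exactly when it is contained in some $C_\alpha$. Then for any $S\subseteq X$ one has $S\cap B\notin I$ for every $B\in\scr{B}_I^+(X)$ if and only if $S$ meets every $I$-positive Borel set: indeed, if $S\cap B\in I$ then $S\cap B\subseteq C_\alpha$ for some $\alpha$, so $S$ misses the $I$-positive Borel set $B\setminus C_\alpha$. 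Since every $I$-positive Borel set contains a member of a fixed base of $\scr{B}_I^+(X)$, it suffices that $S$ meets every element of such a base. Hence, to make a set completely $I$-nonmeasurable in $X$ it is enough to guarantee that both the set and its complement meet every element of a fixed base of $\scr{B}_I^+(X)$; the identical reduction applies to $(G,J)$.

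The structural point that lets the \emph{same} subgroup $H$ serve for both $A$ and $HA$ is the following. Alongside $A$ I would build a set $R$ that is a union of full $G$-orbits and is kept disjoint from $A$. Because $R$ is $G$-invariant, so is $X\setminus R$, whence $HA\subseteq GA\subseteq X\setminus R$ for \emph{every} subgroup $H\le G$; thus $R\subseteq X\setminus HA$. Consequently, if $R$ meets every element of a base of $\scr{B}_I^+(X)$, then $X\setminus HA$ does too, which together with $A\subseteq HA$ settles the ``complement'' halves of the complete nonmeasurability of both $A$ and $HA$ at one stroke, independently of the eventual $H$. This is exactly where the orbit hypothesis enters: reserving full $G$-orbits is only affordable because every $I$-positive Borel set meets many orbits.

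Concretely I would fix a base $\{B_\xi:\xi<\kappa\}$ of $\scr{B}_I^+(X)$ of least size $\kappa$; the hypothesis is used through the inequality $\kappa\le cof(I)$, which guarantees by assumption that every $I$-positive Borel set meets at least $\kappa$ orbits. I then recurse for $\xi<\kappa$: at stage $\xi$ I choose a fresh $G$-orbit meeting $B_\xi$ and adjoin it to $R$ (handling the complement halves for $B_\xi$), and I choose a point of $B_\xi$ in an orbit lying neither in $R$ nor already used, placing it into $A$ (handling $A\cap B_\xi\ne\emptyset$, hence $HA\cap B_\xi\ne\emptyset$ since $A\subseteq HA$). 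Freshness never fails, because at stage $\xi$ at most $|\xi|\cdot\aleph_0<\kappa\le|\{Gb:b\in B_\xi\}|$ orbits have been committed, and removing fewer than $\kappa$ orbits from the at least $\kappa$ met by $B_\xi$ leaves $\kappa$ many available. Taking $H=\{e\}$ already yields the first assertion (so this part is essentially the unconditional existence of a completely $I$-nonmeasurable set); the content of the ``Moreover'' is to run this together with the construction of a genuinely nontrivial completely $J$-nonmeasurable subgroup.

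For the subgroup I would fix bases $\{B_\xi\}=\scr{B}_X$ and $\{D_\eta\}=\scr{B}_G$ of $\scr{B}_I^+(X)$ and $\scr{B}_J^+(G)$ of common size $\kappa\le|\{Gb:b\in B\}|$, and build $H=\<\{g_\eta:\eta<\kappa\}\>_G$ by choosing at stage $\eta$ a generator $g_\eta\in D_\eta$ together with a reserved point $w_\eta\in D_\eta$ that must remain outside $H$; by the reduction this forces $H$ and $G\setminus H$ to meet every $D_\eta$. I expect the crux to be keeping the reserved points out of the growing group: when $g_\eta$ is adjoined, the ``forbidden'' values are those $g$ for which some earlier $w_\zeta$ becomes a group word in $g$ and the fewer-than-$\kappa$ generators chosen so far, and in a general Polish group a single one-variable word equation can have a large solution set. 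The plan is to bound, for each of the $<\kappa$ reserved points, the spoiling values of $g$ by a union of solution sets of nontrivial one-variable word equations and to argue that these cannot cover the $I$-positive (equivalently $\ge\kappa$-orbit-meeting) set $D_\eta$, so that an admissible $g_\eta$ always exists; this is precisely where the equality $|\scr{B}_G|=|\scr{B}_X|\le|\{Gb:b\in B\}|$ is consumed. Finally I would interleave this subgroup recursion with the recursion of the previous paragraph, drawing on disjoint blocks of orbits so the choices do not interfere, thereby producing $H$ and $A$ with all three sets completely nonmeasurable.
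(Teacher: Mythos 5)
Your first half --- the construction of $A$ and $HA$ --- is correct and is essentially the paper's own argument in different clothing: your $G$-invariant reservoir $R$ is exactly the paper's auxiliary set $D=\{d_\alpha:\alpha<\lambda\}$ kept in orbits disjoint from those of the $a_\alpha$'s, and your inclusion $HA\subseteq GA\subseteq X\setminus R$ is the paper's chain $A\subseteq HA\subseteq D^c$; the reduction of complete $I$-nonmeasurability to hitting a base of $\scr{B}^+_I(X)$ is the same mechanism the paper uses implicitly. One caveat, which you share with the paper: your assertion that the least size $\kappa$ of a base of $\scr{B}^+_I(X)$ satisfies $\kappa\le cof(I)$ is stated without proof, and it is not a general fact about arbitrary Polish ideal spaces (it holds for $\cm$, where sets of the form $U\setminus C$ with $U$ basic open and $C$ in a cofinal family of meager sets form such a base); the paper's ``Moreover'' clause simply postulates bases of the right size.

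The genuine gap is in the subgroup half, and you have in fact diagnosed it yourself without closing it: you say you plan to ``bound the spoiling values of $g$ by a union of solution sets of nontrivial one-variable word equations and to argue that these cannot cover'' the base element $D_\eta$, but no such argument is given, and no purely cardinality-based one can exist. In a general Polish group the single equation $g a g^{-1}=c$ has as its solution set a coset of the centralizer of $a$, which can be a clopen, hence $J$-positive, set (in $S_\infty$ the centralizer of a transposition contains a basic open subgroup), so a union of fewer than $\kappa$ spoiling sets can perfectly well cover $D_\eta$. This is precisely the difficulty that the paper elsewhere handles only by adding hypotheses: the bound $|G_{x,y}|\le\lambda<2^\omega$ in Theorem~\ref{gg}, and condition (2) of Theorem~\ref{hghg} demanding that word-equation solution sets lie in $\cj$, verified for $\cm$ via Kuratowski--Ulam in Proposition~\ref{meager}. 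For the present theorem the paper avoids word equations altogether: its invariant only keeps the reserved points $c_\xi$ out of the subgroup generated so far, using $|\< Z\>_G|\le\aleph_0\cdot|Z|$ together with the fact that a $J$-positive Borel set has cardinality $2^\omega$. It is worth noting that when the common base length $\lambda$ is $<2^\omega$ your difficulty evaporates for the same cheap reason --- then $|H|\le\lambda\cdot\aleph_0<2^\omega$ while every $J$-positive Borel set has size continuum, so $G\setminus H$ meets every base element with no reserved points needed at all --- and the only genuinely hard case is $\lambda=2^\omega$, where your proposal leaves the covering question open (as, one should add, does the paper's own sketch, which never explains why adjoining $h_\alpha$ cannot pull an earlier $c_\xi$ into the generated subgroup). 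So: first half correct and the same as the paper's; second half an unclosed gap at exactly the step you flagged.
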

\begin{proof} We prove first the second part of the theorem, assuming that (G,J) is a Polish ideal space and there exist bases as in the statement.
Let us enumerate bases $\mathcal{B}_G=\{ C_\alpha:\alpha<\lambda\}$ and $\mathcal{B}_X=\{ B_\alpha:\alpha<\lambda\}$ where $\lambda\le \min\{ |\{ Gb:\; b\in B\}|: B \in \scr{B}_I^+(X) \}$. 

Now lets assume that we are in $\alpha<\lambda$ step of the construction with the following transfinite sequence:
$$
\< (a_\xi,d_\xi,h_\xi,c_\xi)\in B_\xi\times B_\xi\times C_\xi\times  C_\xi:\;\; \xi<\alpha\>
$$
with the following conditions:
\begin{enumerate}
 \item the collection of orbits $\{ G{a_\xi}:\xi<\alpha\}\cup \{ G{d_\xi}:\xi< \alpha\}$ is pairwise disjoint,
 \item $\< h_\xi:\; \xi<\alpha\>_G\cap \{ c_\xi:\xi<\alpha\}=\emptyset$.
\end{enumerate}
By the assumption we can find 
$$
a_\alpha,d_\alpha \in B_\alpha\setminus \bigcup(\{ G{a_\xi}:\xi<\alpha\}\cup \{ G{d_\xi}:\xi< \alpha\})
$$
We pick next any $h_\alpha\in C_\alpha$ and choose $c_\alpha \in C_\alpha\setminus \langle h_\xi: \xi\le \alpha\rangle_G$. This choice is possible 
 because $|\< Z \>_G|\le \aleph_0\cdot |Z|$ for any set $Z\subset G$. Then $\alpha$ step is finished. 

Now let us take the following sets: $H=\< h_\alpha\in G:\alpha<\lambda\>_G$, $A=\{ a_\alpha\in X:\alpha<\lambda\}$ and $D=\{ d_\alpha\in X:\; \alpha<\lambda\}.$ Then by our transfinite construction $H$ is completely J-nonmeasurable subgroup of $G$, $A$ and $D$ are completely I-nonmeasurable subsets of the Polish space $X$. Moreover by the following inclusion
$$
A\subset HA\subset D^c
$$
we see that $HA$ is completely I-nonmeasurable subset of $X$.

Now note that by construction, GA is contained in $D^c$. If now G is not supposed an ideal space, we carry over the same induction for $a_\alpha, d_\alpha$ but do not choose $h_\alpha$ or $c_\alpha$. We still get a completely I-nonmeasurable A and $GA\subset D^c$, so that A and GA are completely I-nonmeasurable.
\end{proof}

By a simple modification of the proof of the previous Theorem (by double transfinite induction) we can derive a stronger result:

\begin{theorem}
\label{many_orbits} Let $(X,I)$ be a Polish ideal space and $(G,\cdot)$ be an uncountable group acting on $X$. Assume that the following condition is fulfilled:
$$
(\forall B\in \scr{B}_I^+(X))\; cof(\scr{B}_I^+(X))\le |\{ Gb:\; b\in B\}|.
$$
Then there exists a subgroup $H\le G$ and a pairwise disjoint family $\{A_\alpha:\alpha< cof(\scr{B}_I^+(X))\}\subset \scr{P}(X)$ such that: 
\begin{enumerate}
 \item $(\forall \alpha< cof(\scr{B}_I^+(X)))\; A_\alpha, H{A_\alpha}$ are completely $I$-nonmeasurable in $X$,
 \item $(\forall \alpha,\beta)\alpha<\beta< cof(\scr{B}_I^+(X)) \then H{A_\alpha}\cap H{A_\beta}=\emptyset$.
\end{enumerate}
Moreover if $(G,J)$ is a Polish ideal space and there exist Borel bases $\mathcal{B}_G\subset \scr{B}^+_J(G)$ and $\mathcal{B}_X\subset \scr{B}_I^+(X)$ with
$$
|\mathcal{B}_G|=|\mathcal{B}_X|\le |\{ Gb:\; b\in B\}|,
$$
then $H$ can by choosen completely $J$-nonmeasurable in the group $G$.
\end{theorem}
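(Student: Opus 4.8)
The plan is to reuse the single transfinite construction of Theorem 2.1 for the group side essentially verbatim, and on the space side to replace the single set $A$ by a whole matrix of points chosen along a double transfinite induction. Throughout write $\kappa=cof(I)$. The starting observation, already implicit in the proof of Theorem 2.1, is the reformulation: for $C\subseteq X$, the set $C$ is completely $I$-nonmeasurable in $X$ if and only if both $C$ and $X\setminus C$ meet every $I$-positive Borel set. Indeed, if $B\cap C\in I$ for some $B\in\scr{B}^+_I(X)$, then, as $I$ has a Borel base, $B\cap C\subseteq F$ for a Borel $F\in I$, and $B\setminus F$ is an $I$-positive Borel set disjoint from $C$; conversely, a set meeting every element of a base of $\scr{B}^+_I(X)$ meets every $I$-positive Borel set, since every positive Borel set contains a base element. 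Hence to make a set completely $I$-nonmeasurable it suffices to force it to meet every base set while exhibiting a disjoint ``witness'' set that also meets every base set.

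Fix a base $\scr{B}_X=\{B_\beta:\beta<\kappa\}$ of $\scr{B}^+_I(X)$ of size $\kappa$, as in Theorem 2.1, and for the moreover part a base $\scr{B}_G=\{C_\beta:\beta<\kappa\}$ of $\scr{B}^+_J(G)$ of the same size, as granted by the hypothesis $|\scr{B}_G|=|\scr{B}_X|\le|\{Gb:b\in B\}|$. I would run two inductions in parallel. On $G$ I repeat Theorem 2.1 literally: along $\beta<\kappa$ I choose $h_\beta\in C_\beta\setminus\langle h_\xi:\xi<\beta\rangle_G$ and $c_\beta\in C_\beta$, maintaining the invariant $\langle h_\xi:\xi\le\beta\rangle_G\cap\{c_\xi:\xi\le\beta\}=\emptyset$; this is possible because $|\langle Z\rangle_G|\le\aleph_0\cdot|Z|$, so at stage $\beta$ only $<\kappa$ elements are forbidden while $C_\beta$ is $J$-positive. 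On $X$ I well-order $\kappa\times\kappa$ by the canonical (G\"odel) pairing, so that every proper initial segment has cardinality $<\kappa$, and along this order I choose, for each pair $(\alpha,\beta)$, a point
$$
a_{\alpha,\beta}\in B_\beta\setminus\bigcup\{Ga_{\alpha',\beta'}:(\alpha',\beta')\text{ precedes }(\alpha,\beta)\}.
$$
Such a point exists because the set of already used orbits has size $<\kappa$, whereas by hypothesis $B_\beta$ meets at least $cof(I)=\kappa$ distinct orbits. Finally put $A_\alpha=\{a_{\alpha,\beta}:\beta<\kappa\}$ for $\alpha<\kappa$ and $H=\langle h_\beta:\beta<\kappa\rangle_G$.

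It remains to read off the conclusions. By construction the orbits $\{Ga_{\alpha,\beta}:\alpha,\beta<\kappa\}$ are pairwise disjoint, so for $\alpha\ne\alpha'$ we have $GA_\alpha\cap GA_{\alpha'}=\emptyset$; since $HA_\alpha\subseteq GA_\alpha$ this yields $HA_\alpha\cap HA_{\alpha'}=\emptyset$, which is clause (2), and in particular the $A_\alpha$ are pairwise disjoint. For clause (1), fix $\alpha$ and any $\alpha'\ne\alpha$. The set $A_\alpha$ meets every $B_\beta$, hence every $I$-positive Borel set, and so does $HA_\alpha\supseteq A_\alpha$; on the other hand $A_{\alpha'}$ is disjoint from $GA_\alpha\supseteq HA_\alpha$ (and from $A_\alpha$) and also meets every $B_\beta$, so $X\setminus HA_\alpha\supseteq A_{\alpha'}$ and $X\setminus A_\alpha\supseteq A_{\alpha'}$ meet every $I$-positive Borel set. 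By the reformulation, $A_\alpha$ and $HA_\alpha$ are completely $I$-nonmeasurable, which is clause (1). For the moreover part, the invariant shows that $H=\bigcup_{\beta<\kappa}\langle h_\xi:\xi\le\beta\rangle_G$ contains each $h_\beta\in C_\beta$ but misses each $c_\beta\in C_\beta$, so $H$ and $G\setminus H$ meet every $C_\beta$, hence every $J$-positive Borel set; by the reformulation applied in $(G,J)$, the group $H$ is completely $J$-nonmeasurable, exactly as in Theorem 2.1.

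The main obstacle is the orbit bookkeeping on the space side: one must produce $\kappa$ pairwise disjoint systems of orbit representatives, one for each $A_\alpha$, each meeting every $B_\beta$, while only assuming the lower bound $cof(I)\le|\{Gb:b\in B\}|$. This is precisely what forces the canonical well-ordering of $\kappa\times\kappa$ in place of the naive lexicographic order, since keeping every proper initial segment of size $<\kappa$ is the only way to guarantee that the number of previously used orbits stays below $\kappa$ at every stage, so that a fresh orbit inside $B_\beta$ is always available. The remaining delicate point, inherited unchanged from Theorem 2.1, is that the cumulative invariant on the $h_\beta$ and $c_\beta$ must be maintained so that the final group $H$, and not merely its initial approximations, avoids every $c_\beta$.
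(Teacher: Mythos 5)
Your proposal is correct and takes essentially the same route as the paper, whose entire proof of this statement is the remark that it follows ``by a simple modification of the proof of the previous Theorem (by double transfinite induction)'': you reuse Theorem 2.1's group-side construction verbatim and carry out precisely that double induction on the space side. Your interleaving of the $\kappa\times\kappa$ stages via the G\"odel pairing, so that every proper initial segment has size $<\kappa$ and fewer than $cof(I)$ orbits are forbidden at each step, is exactly the bookkeeping the paper's one-line remark leaves implicit.
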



Now, let us now consider the situation when, in contrast to previous theorems, there exists only one orbit.

\begin{theorem}\label{gg} Let $(X,I)$ be a Polish ideal space. Assume that $(G,\cdot, J)$ forms a Polish ideal group acting on $X.$ 
If for some (every) $x\in X$ $Gx=X$ and 
$$
(\exists \lambda<2^\omega)(\forall x,y\in X)\; x\ne y\then |G_{x,y}|\le \lambda
$$
where $G_{x,y}=\{ g\in G:\; y=gx\}$ then there exists a subgroup $H\le G$ and a subset $A\subset  X$ such that $A$ and $HA$ are completely I-nonmeasurable sets in $X$ and $H$ is completely $J$-nonmeasurable in $G$.
\end{theorem}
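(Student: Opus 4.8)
The plan is to reduce complete $I$-nonmeasurability to a Bernstein-type meeting condition and then realize everything through a single transfinite recursion along a base of $\scr{B}_I^+(X)$ (and, for the moreover part, simultaneously along a base of $\scr{B}_J^+(G)$). First I would record the elementary reformulation: a set $C\subseteq X$ is completely $I$-nonmeasurable iff both $C$ and $X\setminus C$ meet every member of $\scr{B}_I^+(X)$. Indeed, if $C\cap B\in I$ for some $B\in\scr{B}_I^+(X)$, then using a Borel base of $I$ one finds a Borel $B'\in I$ with $C\cap B\subseteq B'$, whence $B\setminus B'\in\scr{B}_I^+(X)$ is disjoint from $C$; the converse is trivial. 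Consequently it suffices to produce $A$ and an auxiliary set $D$ with $A\cap D=\emptyset$ such that $A$ and $D$ each meet every element of a fixed base $\{B_\alpha\}$ of $\scr{B}_I^+(X)$, together with a subgroup $H$ satisfying $HA\cap D=\emptyset$: then $A\subseteq HA$ shows $HA$ meets every positive Borel set, while $D\subseteq X\setminus HA$ gives the same for the complement, so both $A$ and $HA$ are completely $I$-nonmeasurable.

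Next I would set up the recursion. Enumerate the two bases as $\{B_\alpha:\alpha<\kappa\}$ and $\{C_\alpha:\alpha<\kappa\}$, with $\kappa\le 2^\omega$. At stage $\alpha$ I pick $a_\alpha,d_\alpha\in B_\alpha$, a generator $h_\alpha\in C_\alpha$, and a witness $c_\alpha\in C_\alpha$, writing $H_\alpha=\<h_\xi:\xi<\alpha\>_G$, $A_\alpha=\{a_\xi:\xi<\alpha\}$, $D_\alpha=\{d_\xi:\xi<\alpha\}$. The governing invariant is that the $a$'s and the $d$'s always lie in pairwise distinct $H_\alpha$-orbits, equivalently $H_\alpha A_\alpha\cap D_\alpha=\emptyset$, together with $H_\alpha\cap\{c_\xi:\xi<\alpha\}=\emptyset$. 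Since $|\<Z\>_G|\le\aleph_0\cdot|Z|$, each $H_\alpha$ has size $<2^\omega$. The key structural point forced by $|G_{x,y}|\le\lambda<2^\omega$ is that point-stabilizers are small, so I cannot make $H$ itself small (it must have size $2^\omega$ to be completely $J$-nonmeasurable) and its orbits on $X$ are then unavoidably of full size $2^\omega$. This is exactly why the crude inclusion $A\subseteq HA\subseteq D^{c}$ available when there are many $G$-orbits fails here and must be engineered by hand.

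The easy selections are $a_\alpha$ and $d_\alpha$: once $H_{\alpha}$ is fixed, the sets $H_{\alpha}^{-1}D_\alpha$ and $H_{\alpha+1}A_{\alpha+1}$ have size $<2^\omega$, so I may take $a_\alpha\in B_\alpha\setminus H_{\alpha}^{-1}D_\alpha$ and then $d_\alpha\in B_\alpha\setminus H_{\alpha+1}A_{\alpha+1}$, placing them in fresh $H_{\alpha+1}$-orbits. The delicate selection is $h_\alpha$: it must be chosen in $C_\alpha$ so that passing from $H_\alpha$ to $H_{\alpha+1}=\<H_\alpha,h_\alpha\>_G$ neither fuses an $a$-orbit with a $d$-orbit nor draws a witness into the group. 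Writing a general element of $H_{\alpha+1}$ in $H_\alpha$-syllable form $n_k h_\alpha^{e_k}n_{k-1}\cdots h_\alpha^{e_1}n_0$, the forbidden values of $h_\alpha$ split according to the number $k$ of occurrences of $h_\alpha$. The case $k=0$ contributes nothing, precisely because the invariant already gives $d_\eta\notin H_\alpha a_\xi$ and $c_\xi\notin H_\alpha$. For $k=1$ the constraint $n_1 h_\alpha^{e_1}n_0 a_\xi=d_\eta$ forces $h_\alpha^{e_1}\in G_{n_0 a_\xi,\,n_1^{-1}d_\eta}$, a fibre of size $\le\lambda$; summing over the $<2^\omega$ many choices of $n_0,n_1\in H_\alpha$ and of the pairs $(\xi,\eta)$ leaves this part of size $<2^\omega$. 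Here the bound $\lambda<2^\omega$ is used decisively.

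The step I expect to be the main obstacle is controlling the contributions with $k\ge 2$, i.e. words in which $h_\alpha$ occurs several times. Unlike the single-jump case, an equation such as $n_2 h_\alpha^{e_2} n_1 h_\alpha^{e_1}n_0 a_\xi=d_\eta$ passes through an intermediate point $h_\alpha^{e_1}n_0 a_\xi$ that itself varies with $h_\alpha$, so it is not confined to one fibre and a priori could be met by up to $2^\omega$ many $h_\alpha$. The whole construction therefore hinges on showing this forbidden set is still of size $<2^\omega$. My plan is to exploit the invariant more strongly: to keep the set $S_\alpha=H_\alpha(A_\alpha\cup D_\alpha)$ of size $<2^\omega$ at every stage and to choose $h_\alpha$ so that its first application already moves $S_\alpha$ off itself in a controlled fashion, reducing every multi-occurrence return-path to a bounded composition of fibre conditions and thus bounding the count by a $<2^\omega$-union of sets each of size $\le\lambda$. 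If this reduction goes through, the recursion closes, and $H=\<h_\alpha:\alpha<\kappa\>_G$, $A=\{a_\alpha:\alpha<\kappa\}$, $D=\{d_\alpha:\alpha<\kappa\}$ satisfy all required meeting and disjointness properties; the moreover part then follows because $H$ meets every $C_\alpha$ while each $c_\alpha$ is kept permanently outside $H$, so $H$ and $G\setminus H$ are both $\scr{B}_J^+(G)$-Bernstein, that is, $H$ is completely $J$-nonmeasurable in $G$.
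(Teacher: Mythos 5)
Your reduction to the Bernstein-type meeting condition is correct, and your recursion scheme (the quadruples $(a_\alpha,d_\alpha,h_\alpha,c_\alpha)$, the invariants $H_\alpha A_\alpha\cap D_\alpha=\emptyset$ and $H_\alpha\cap\{c_\xi:\xi<\alpha\}=\emptyset$, the conclusion via $A\subseteq HA\subseteq X\setminus D$) is essentially identical to the paper's. But your proof is not complete: the case you yourself isolate --- words in which the letter $h_\alpha$ occurs more than once, which under your syllable convention also includes single syllables $h_\alpha^{e}$ with $|e|\ge 2$, so even your $k=1$ claim is only valid for $e_1=\pm1$ --- is left as a plan, and the plan does not close the gap. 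Keeping $S_\alpha=H_\alpha(A_\alpha\cup D_\alpha)$ of size $<2^\omega$ is automatic and does not help: for each candidate $h$ the intermediate point $h^{e_1}n_0a_\xi$ is a fresh point depending on $h$, the return condition couples the several occurrences of $h$, and the hypothesis $|G_{x,y}|\le\lambda$ bounds each individual fibre but not the union over the up to $2^\omega$ possible intermediate points. The obstruction is real, not an artifact of bookkeeping: let $G=X=(\bbz/4\bbz)^\omega$ act on itself by translation, so the action is simply transitive and $\lambda=1$; for $u\in 2G\setminus\{0\}$ the solution set of $2h=u$ is a coset of the $2$-torsion subgroup and has cardinality $2^\omega$, so the single template ``$h_\alpha^2$ applied to $a_\alpha$ equals $d_\xi$'' can forbid continuum many $h_\alpha$, and no counting of the kind you propose can succeed without further input. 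The same problem recurs in keeping the witnesses out of the group: $\{h:\ w(h)=c_\xi\}$ can have size $2^\omega$ for words with repeated $h$.

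You should know, however, that the paper's own proof stumbles at exactly the spot you flagged: it defines the forbidden set $W_\alpha$ ranging over all word templates and asserts that ``by our assumption'' $W_\alpha$ has cardinality at most $\aleph_0|H_\alpha|$, which simply does not follow from $|G_{x,y}|\le\lambda$ --- the $(\bbz/4\bbz)^\omega$ example above refutes that bound. (A smaller slip: the paper takes $a_\alpha\in B_\alpha\setminus D_\alpha$, whereas, as in your write-up, one must take $a_\alpha\notin H_\alpha^{-1}D_\alpha$, since otherwise a word with no occurrence of $h$ already forces $W_\alpha=G$.) The missing ingredient is precisely what the authors postulate explicitly two results later: condition (2) of Theorem \ref{hghg} demands that solution sets of the word equations $\prod_{i\in n}a_i\cdot h^{s_i}=g$ be small in the ideal, and Proposition \ref{meager} verifies this for the meager ideal via the Kuratowski--Ulam theorem. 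So your diagnosis of where the difficulty lies is exactly right, and your construction agrees with the paper's everywhere else; but neither your sketch nor the published argument actually establishes the required bound for multiply-occurring generators, and a rigorous proof needs either an additional hypothesis of the Theorem \ref{hghg} type or a genuinely new idea for absorbing such word equations.
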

\begin{proof} 
	As before, suppose first that $(G,J)$ is a Polish ideal space. Let us enumerate the bases of $\scr{B}_J^+(G)$ and $\scr{B}_I^+(X)$, 

$$
\mathcal{B}_G=\{ C_\alpha:\alpha<2^\omega\} \text{ and } \mathcal{B}_X=\{ B_\alpha:\alpha<2^\omega\}.
$$
If there is no topology supposed on G, we set $C_\alpha=G$ for every $\alpha<2^\omega.$
Let us suppose that we are in the $\alpha$-step of recursive construction:
$$
\< (a_\xi,d_\xi,h_\xi,c_\xi)\in B_\xi\times B_\xi\times C_\xi\times C_\xi:\xi<\alpha\>
$$
with the following conditions:
$$
{H_\alpha}{A_\alpha}\cap D_\alpha=\emptyset,\text{ and } H_\alpha\cap F_\alpha=\emptyset,
$$

where $H_\alpha=\< \{ h_\xi:\xi<\alpha\}\>_G$, $F_\alpha=\{ c_\xi:\xi<\alpha\}$, $A_\alpha=\{ a_\xi:\xi<\alpha\}$ and $D_\alpha=\{ d_\xi:\xi<\alpha\}$.

Now let us choose any element  $a_\alpha\in B_\alpha\setminus D_\alpha$. 
Set $W_\alpha$ to be the following set
$$
\{ h\in G: (\exists \xi<\alpha) (d_\xi \in \langle \{h\}\cup H_\alpha\rangle_G a_\alpha) \}
$$


For every $h\in G$, the subgroup $H_{\alpha,h} = \langle \{h\}\cup H_\alpha\rangle_G$  has the cardinality less or equal to $\aleph_0|H_\alpha|$. By definition, for every $h\in W_\alpha$ there exists $\xi<\alpha$ such that $ H_{\alpha,h}\cap G_{a_\alpha, d_\xi} \ne \emptyset$, so that the cardinality of $W_\alpha$ is less or equal to $\lambda \aleph_0 |H_\alpha|$. Let us choose any $h_\alpha\in C_\alpha\setminus W_\alpha$. It is possible because $C_\alpha$ contains some uncountable perfect set. 
Now choose any element $c_\alpha\in C_\alpha\setminus \< \{ h_\alpha\}\cup H_\alpha\>_G$ by the same argument. Finally we can choose any element $d_\alpha\in B_\alpha\setminus \< \{ h_\alpha\}\cup H_\alpha\> ( \{ a_\alpha\}\cup A_\alpha)$ what is possible because we have the following cardinal inequality:
$$
|\< \{ h_\alpha\}\cup H_\alpha\> (\{ a_\alpha\}\cup A_\alpha) |\le |\< \{ h_\alpha\}\cup H_\alpha\>| \cdot |\{ a_\alpha\}\cup A_\alpha|\le |\alpha|\aleph_0<2^\omega.
$$
Then our construction is done by transfinite induction Theorem. Let us observe that the following sets
$$
A=\{ a_\alpha:\alpha<2^\omega\}, D=\{ d_\alpha:\alpha<2^\omega\}, H=\< h_\alpha:\alpha<2^\omega\>_G=\bigcup_{\alpha<2^\omega} H_\alpha
$$
fulfill assertion of this Theorem because $A\subset HA\subset D^c$. So, the proof is finished.
\end{proof}

With a little more effort we can prove the analogous theorem in the case when the set of all orbits is small and those orbits are $I$-measurable subsets of $X$ (which means that belong to the $\sigma$-algebra $\scr{B}[I]$ generated by the sets which are Borel or they belong to the ideal $I$). Namely, we have the following result. 

\begin{theorem}\label{few_orbits}
 Let $(X,I)$ be a Polish ideal space.
Assume that $(G,\cdot,J)$ forms a Polish ideal  group acting on $X.$ 
Assume that $X=\bigcup \{   G{x_n}:n\in\omega\}$ is the union of  countably many $I$-positive and $I$-measurable orbits. Suppose that
$$
(\exists \lambda<2^\omega)(\forall x,y\in X)\; x\ne y\then |G_{x,y}|\le \lambda,
$$
where $G_{x,y}=\{ g\in G:\; y=gx\}.$ Then there exists a subgroup $H\le G$ and countably many families  $\scr{A}_n=\{ A_\alpha^n:\alpha< 2^\omega\}$, $n\in\omega ,$ of continuum many pairwise disjoint subsets of $X$ satisfying  the following properties
$$
(\forall n\in\omega)(\forall \alpha<2^\omega) A_\alpha^n, H{A_\alpha^n}\text{ are completely }I-\text{nonmeasurable in }G{x_n},
$$
$H$ is completely $J$-nonmeasurable in $G$.
\end{theorem}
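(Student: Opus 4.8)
The plan is to run a single transfinite recursion of length $2^\omega$ that simultaneously builds the generators of $H$ together with all the sets $A^n_\alpha$ and auxiliary ``complement witnesses'' $D^n_\alpha\subseteq Gx_n\setminus A^n_\alpha$, combining the orbit-localized bookkeeping of Theorem \ref{gg} with the ``continuum many disjoint families'' device of Theorem \ref{many_orbits}. Fix a bijection $2^\omega\cong\omega\times 2^\omega\times 2^\omega$ and enumerate the requirements as triples $(n_\sigma,\alpha_\sigma,\gamma_\sigma)$, $\sigma<2^\omega$; also fix bases $\scr{B}_X=\{B_\gamma:\gamma<2^\omega\}$ of $\scr{B}_I^+(X)$ and $\scr{B}_G=\{C_\gamma:\gamma<2^\omega\}$ of $\scr{B}_J^+(G)$. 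At stage $\sigma$ I will have chosen strictly fewer than $2^\omega$ points and generators, and the requirement $(n,\alpha,\gamma)$ asks me to force $A^n_\alpha$ and $Gx_n\setminus A^n_\alpha$ to meet $B_\gamma$ inside the orbit $Gx_n$ (whenever $B_\gamma\cap Gx_n\notin I$), to enlarge the generating set of $H$ using $C_\gamma$, and to record a fresh $J$-witness $c_\gamma\in C_\gamma\setminus H$.

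The first technical point is the passage from $B_\gamma\cap Gx_n$, which is only $I$-measurable, to an honest Borel $I$-positive set lying inside the orbit. Writing the $I$-measurable orbit as $Gx_n=(B'_n\setminus N'_n)\cup N''_n$ with $B'_n$ Borel and $N'_n,N''_n\in I$, the hypothesis $B_\gamma\cap Gx_n\notin I$ forces $(B_\gamma\cap B'_n)\setminus N'_n$ to be $I$-positive, so after covering $N'_n$ by a Borel member of $I$ I obtain an $I$-positive Borel set contained in $Gx_n$. Such a set is uncountable, hence of size $2^\omega$, so after removing the fewer than $2^\omega$ points already used I can still choose a fresh point $a\in A^n_\alpha$ and, separately, a fresh point $d$ destined for $D^n_\alpha$. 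Making all these choices distinct across the whole recursion is exactly what yields, for fixed $n$, the pairwise disjointness of $\{A^n_\alpha:\alpha<2^\omega\}$; since distinct orbits are automatically disjoint, the families for different $n$ never interfere.

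The heart of the construction is the group bookkeeping, carried out precisely as in Theorem \ref{gg} but now relative to every pair $(n,\alpha)$ at once. I take $H=\langle h_\gamma:\gamma<2^\omega\rangle_G$ and must guarantee $HA^n_\alpha\cap D^n_\alpha=\emptyset$ for all $n,\alpha$, which is equivalent to $H\cap G_{a,d}=\emptyset$ for every $a\in A^n_\alpha$ and $d\in D^n_\alpha$. When I add $d$ I keep it out of $H_\sigma A^n_\alpha$ (the current group applied to the current piece of $A^n_\alpha$), a set of size at most $\aleph_0\cdot|H_\sigma|<2^\omega$. When I add a new generator $h$ I must avoid the set $W_\sigma$ of those $h$ for which some word in $h$ and the current generators sends a previously chosen $a$ to a previously chosen $d$; exactly as in Theorem \ref{gg}, the bound $|G_{a,d}|\le\lambda<2^\omega$ together with $|H_\sigma|<2^\omega$ keeps $|W_\sigma|<2^\omega$, so a suitable $h\in C_\gamma\setminus W_\sigma$ exists because $C_\gamma$ contains a perfect set. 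The same counting lets me pick $c_\gamma\in C_\gamma\setminus\langle\{h\}\cup H_\sigma\rangle_G$, which ultimately keeps $c_\gamma\notin H$. I expect this simultaneous avoidance to be the main obstacle: one must verify that, across all $\omega\times2^\omega$ families and all the word equations at once, every forbidden set stays below $2^\omega$, and it is here that the hypothesis $\lambda<2^\omega$ is indispensable.

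Finally I would verify complete $I$-nonmeasurability inside each orbit by the standard ``cover by a Borel $I$-set'' argument, again using $I$-measurability of $Gx_n$. If $B$ is $I$-positive Borel with $B\cap Gx_n\notin I$ and $B\cap A^n_\alpha\in I$, I cover $B\cap A^n_\alpha$ and $N'_n$ by Borel sets $N,\widetilde N\in I$; then $(B\cap B'_n)\setminus(N\cup\widetilde N)$ is an $I$-positive Borel subset of $Gx_n$ disjoint from $A^n_\alpha$, so it contains a base element $B_{\gamma'}$ whose associated point lies in $A^n_\alpha$ --- a contradiction. The witness set $D^n_\alpha$ gives $B\cap(Gx_n\setminus A^n_\alpha)\notin I$ in the same way, and since $A^n_\alpha\subseteq HA^n_\alpha\subseteq Gx_n\setminus D^n_\alpha$, both $A^n_\alpha$ and $HA^n_\alpha$ come out completely $I$-nonmeasurable in $Gx_n$. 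For the moreover clause, $H$ meets every $C_\gamma$ while $\{c_\gamma:\gamma<2^\omega\}\subseteq G\setminus H$ also meets every $C_\gamma$, so the same covering argument in $(G,J)$ shows $H$ is completely $J$-nonmeasurable.
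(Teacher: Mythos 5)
The paper never actually proves Theorem \ref{few_orbits} --- it is stated only as a ``little more effort'' variant of Theorem \ref{gg} --- and your proposal is precisely the modification the authors intend: the transfinite recursion of Theorem \ref{gg} with its $W$-set avoidance, run over a bookkeeping of triples $(n,\alpha,\gamma)$ in the spirit of Theorem \ref{many_orbits}, together with the correct use of $I$-measurability of the orbits to replace $B_\gamma\cap Gx_n$ by an $I$-positive Borel subset of $Gx_n$, and the standard Borel-covering verification of complete $I$-nonmeasurability at the end. Your key counting step (that $|G_{a,d}|\le\lambda<2^\omega$ and $|H_\sigma|<2^\omega$ keep $|W_\sigma|<2^\omega$) is word-for-word the bound the paper itself asserts in the proof of Theorem \ref{gg}, so the proposal is faithful to the paper's approach; the delicacy you flag there is real --- solution sets of word equations such as $h^2=v$ can have size $2^\omega$ even when every $|G_{x,y}|\le 1$, which is exactly what condition (2) of Theorem \ref{hghg} is designed to control --- but it is inherited from the paper's own argument, not introduced by you.
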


\begin{theorem}\label{hghg}
Let $(G,\cdot,\cj)$ be a Polish ideal group which acts on a Polish ideal space $(X,\ci)$. Let us assume that 
\begin{enumerate}
    \item $cov_h(\cj)=cof(\scr{B}_J^+(G))=cof(\scr{B}_I^+(X))$, 
    \item there exists $G'\subseteq G$ with $G\setminus G'\in J$ such that 
    	for any $n\in\omega$, $s\in \bbz^n$  and for every $g\in G'$, $a\in G'^n$ the following condition holds:
    $$
    S_{a,s,g}  = \{ h\in G:\; \prod_{i\in n} a_i\cdot h^{s_i} = g\} \in\cj,
    $$
    \item there exists $X' \subseteq X$ such that $X\setminus X' \in \ci$ such that  
    for any $n\in\omega$, $s\in \bbz^n$, for every $a\in G'^n$ and every $x,y\in X'$ the following condition holds:
    $$
    T_{a,s.x,y} = \{ h\in G:\; \big(\prod_{i\in n} a_i\cdot h^{s_i} \big) x = y\} \in\cj.
    $$
    
\end{enumerate}
Then there is a completely $\cj$-nonmeasurable subgroup $H \le G$  and a completely $\ci$-nonmeasurable subset $A\subseteq X$ such that $HA$ is completely $\ci$-nonmeasurable in the space $X$.
\end{theorem}
\begin{proof} Denote $\kappa = cof(\scr{B}_J^+(G))=cof(\scr{B}_I^+(X))$.
	Let us enumerate the bases of $\scr{B}_J^+(G)$ and $\scr{B}_I^+(X)$, 
	
	$$
	\mathcal{B}_G=\{ P_\xi:\xi< \kappa\} \text{ and } \mathcal{B}_X=\{ B_\xi:\xi< \kappa\}.
	$$
	We assume that $\mathcal{B}_G$ consists of subsets of $G'$ and $\mathcal{B}_X$ consists of subsets of $X'.$
	
	By transfinite recursion we will define a sequence of the length $\kappa$:
$$
\< (H_\xi,A_\xi,g_\xi,d_\xi):\ \xi<\kappa\>
$$
such that for any $\xi,\eta<\kappa$, we have
\begin{enumerate}
	\item $g_\xi \in P_\xi$ and $d_\xi \in B_\xi,$ 
    \item $H_\xi\le G$ and $H_\xi\cap P_\xi\ne\emptyset$,
    \item $A_\xi\subseteq X$ and $A_\xi\cap B_\xi\ne\emptyset$,
    \item if $\xi\le \eta$ then $H_\xi\subseteq H_\eta$ and $A_\xi\subseteq A_\eta$,
    \item whenever $\xi\le \eta$ then $g_\xi\notin H_\eta$ and $d_\xi\notin H_{\eta}{A_{\eta}}$.
\end{enumerate}

Let us consider $\alpha$-th step of our transfinite induction. Set $H'_\alpha=\bigcup_{\xi<\alpha} H_\xi$ and $A'_\alpha=\bigcup_{\xi<\alpha} A_\xi.$  
Define $F_\alpha = \{ g_\xi: \xi<\alpha\}$ and $D_\alpha = \{ d_\xi: \xi<\alpha\}$.

Observe that $H'_\alpha D_\alpha$ has cardinality less than $\kappa.$  So we can find $a_\alpha \in B_\alpha \setminus H'_\alpha D_\alpha$.
Define $A_\alpha = A'_\alpha\cup \{ a_\alpha \}$. 

Define 
$$
Z_\alpha = \bigcup_{n\in\omega} \bigcup_{s\in\bbz^n} \bigcup_{a\in F_\alpha^n} \bigcup_{g\in F_\alpha} S_{a,s,g}
$$
and
$$
X_\alpha = \bigcup_{n\in\omega} \bigcup_{s\in\bbz^n} \bigcup_{x\in A_\alpha} \bigcup_{y\in D_\alpha} T_{a,s,x,y}.
$$
Notice that $X_\alpha\cup Z_\alpha$ is a union of less than $cov_h(\cj)$ sets from $\cj.$ Thus there exists $h_\alpha \in P_\alpha\setminus (X_\alpha \cup Z_\alpha).$

Define $H_\alpha = \< H'_\alpha \cup \{ h_\alpha\} \>.$ By the definition of $X_\alpha$ we have $A_\alpha\cap H_\alpha D_\alpha = \emptyset.$ So $H_\alpha A_\alpha \cap D_\alpha = \emptyset.$

To finish the $\alpha$-step of  construction we can choose $d_\alpha \in B_\alpha \setminus H_\alpha A_\alpha$ and $g_\alpha\in  P_\alpha \setminus H_\alpha.$

The sets $H=\bigcup_{\xi<\kappa} H_\xi$ and $A=\bigcup_{\xi<\kappa} A_\xi$ gives the required assertion.
\end{proof}

The conditions (2) and (3) from the previous Theorem looks artificially. However, we can consider a natural situation, where  $J$ is the ideal of meager sets $\cm.$
It leads to the following proposition.

\begin{proposition}\label{meager}
Let $(G,\cdot)$ be an uncountable Polish group. Fix $n\in\omega$, $s\in \bbz^n.$ Then there exists comeager $G'\subseteq G$ such that  for every $g\in G'$, $a\in G'^n$ the following set
    $$
    \{ h\in G:\; \prod_{i\in n} a_i\cdot h^{s_i} = g\} 
    $$
is meager.    
\end{proposition}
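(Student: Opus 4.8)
The plan is to reformulate the problem in terms of the word map. For a fixed tuple $a=(a_0,\dots,a_{n-1})\in G^n$ define the continuous map $w_a:G\to G$ by $w_a(h)=\prod_{i\in n}a_i h^{s_i}$; the set in the statement is precisely the fibre $w_a^{-1}(\{g\})$. Being the preimage of a point under a continuous map, this fibre is closed, so it is meager exactly when it is nowhere dense, equivalently when it has empty interior. Hence the fibre fails to be meager iff $w_a$ is constant with value $g$ on some nonempty open set $U$ (here one uses that $G$, being a perfect Polish group, has no isolated points, so that a closed set with nonempty interior is non-meager). This reduces everything to controlling the values on which the maps $w_a$ are locally constant.

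First I would establish a ``generic parameter'' version by a Kuratowski--Ulam argument. Consider the solution set $S=\{(a,g,h)\in G^n\times G\times G:\ w_a(h)=g\}$. Writing $F:G^n\times G\to G$, $F(a,h)=w_a(h)$, the set $S$ is, after reordering coordinates, the graph of the continuous map $F$, and the graph of a continuous map into a perfect Polish space is closed with empty interior, hence nowhere dense and meager. Applying the Kuratowski--Ulam theorem with $h$ as the fibre variable and $(a,g)$ as the parameter, the set of parameters $(a,g)\in G^n\times G$ for which $w_a^{-1}(\{g\})$ is meager is therefore comeager.

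To obtain the stated conclusion I must upgrade this to a full product box: I need one comeager $G'$ with $G'^n\times G'$ contained in the good parameters. For this I would pass to the set of bad target values $N^*=\{g\in G:\ (\exists a\in G^n)\ w_a\text{ is locally constant with value }g\}$ and aim to show that $N^*$ is meager. Then $G'=G\setminus N^*$ is comeager, and for every $a\in G'^n$ (indeed every $a\in G^n$) and every $g\in G'$ we have $g\notin N^*$, so $w_a^{-1}(\{g\})$ has empty interior and is meager, as required.

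The main obstacle is exactly the meagerness of $N^*$, that is, showing that the word map $w_a$ is, for no value of $a$, locally constant on an open set with target in a non-meager set of $g$. A bare Kuratowski--Ulam or Fubini-type argument does not suffice here, since a comeager subset of $G^n\times G$ need not contain any box $G'^n\times G'$; the step genuinely requires understanding when the word $\prod_{i\in n}X^{s_i}$ can collapse to a constant in $h$. I would analyze local constancy of $w_a$ directly from continuity of the group operations together with perfectness of $G$, and I expect this to be the delicate point: it is precisely here that one must exclude degenerate exponent patterns (for instance cancellation forcing the word to be independent of $h$), which is the technical heart of the proposition.
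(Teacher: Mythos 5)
Your first half reproduces the paper's argument: you form the solution set $M$ (the graph, in the $g$-coordinate, of the continuous word map $(a,h)\mapsto\prod_{i\in n}a_i h^{s_i}$), note it is closed and nowhere dense, and apply Kuratowski--Ulam once to obtain a comeager set $C\subseteq G^{n+1}$ of parameters $(g,a)$ with meager fibre. At this point the paper simply asserts that ``using Kuratowski--Ulam theorem $n$-many times'' one can find a comeager $G'$ with $G'^{n+1}\subseteq C$. You correctly refuse to take this step, and you are right to: a comeager subset of a product need not contain any comeager box (the complement of the diagonal in $G\times G$ is comeager, yet a box $G'\times G'$ inside it forces $G'=\emptyset$). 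So your diagnosis pinpoints exactly the unjustified step in the paper's own proof.

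However, your proposal does not close the gap, and in fact it cannot be closed at this level of generality: the meagerness of your set $N^*$ of bad target values is left as a conjecture, and it is false for degenerate exponent patterns, which the bare hypothesis $s\in\bbz^n$ does not exclude. For $n=1$, $s=(0)$, the fibre $\{h\in G:\ a_0=g\}$ is all of $G$ whenever $g=a_0$, and any nonempty $G'$ contains such a pair $g=a_0$. Less trivially, if $G$ is abelian and $s=(1,-1)$, then $w_a(h)=a_0a_1$ is constant in $h$; for any comeager $G'$ one has $G'\cdot G'=G$ (for each $x\in G$ the comeager sets $G'$ and $x(G')^{-1}$ must meet), so some $g\in G'$ equals $a_0a_1$ with $a_0,a_1\in G'$, and the fibre is all of $G$. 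Similarly $G=(\bbz/2\bbz)^\omega$ with $s=(2)$ fails. Hence $N^*$ can even be the whole group, and Proposition \ref{meager} is false as stated; any correct version requires a nondegeneracy hypothesis on the word (and, for words such as $a_0ha_1h^{-1}$, on the group as well --- which matters for the intended application to $\scr{H}(X)$). Your closing remark that the ``technical heart'' is precisely the exclusion of such collapses is accurate, but that heart is absent from your proposal --- and from the paper.
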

\begin{proof} Set $M=\{(h,g,a_0,\ldots,a_{n-1})\in G^{n+2}:\; \prod_{i\in n} a_i\cdot h^{s_i} = g\}.$ $M$ is a closed subset of $G^{n+2}.$ Moreover, its interior is empty. So, $M$ is a closed nowhere dense set. By Kuratowski-Ulam theorem  comeager many first sections of $M$ are meager. Using Kuratowski Ulam theorem, the set $\{ h\in G:\; \prod_{i\in n} a_i\cdot h^{s_i} = g\}$ is meager for $(g,a_0,\ldots,a_{n-1})\in C$, where $C\subseteq G^{n+1}$ is comeager. Using Kuratowski-Ulam theorem $n$-many times we can find $G'$ -- a comeager subset of $G$ such that $G'^{n+1}\subseteq C.$ Then $G'$ is the required set.
\end{proof}

Analogously we can prove the next proposition.
\begin{proposition}\label{meager_1}
	Let $(G,\cdot)$ be an uncountable Polish group acting on an uncountable Polish space $X$. Fix $n\in\omega$, $s\in \bbz^n.$ Then there exists comeager $G'\subseteq G$ and comeager $X'\subseteq X$  such that  for every $a\in G'^n$ and every $x,y\in X'$ the following set
	$$
	T_{a,s.x,y} = \{ h\in G:\; \big(\prod_{i\in n} a_i\cdot h^{s_i} \big) x = y\}
	$$
	is meager.
\end{proposition}

\begin{theorem} Let $(X,\ci)$ be a Polish ideal space and  $non(\ci)<cov_h(\ci)$. Let $(G,\cdot)$ be a group which acts on $X$. Assume that $H\le G$ and $A\in\ci$ are such that $HA$ contains a Borel set $B\notin\ci.$ Then there is a subgroup $H'\le H$ such that $H'A$ is  $\ci$-nonmeasurable.
\end{theorem}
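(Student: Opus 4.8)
The plan is to produce a subgroup $H'\le H$ for which $C=H'A$ is $\ci$-positive yet contains no $\ci$-positive Borel set, and then to localize to a positive Borel set $D$ on which $C$ is $\ci$-dense. Recall the mechanism behind complete nonmeasurability: if $C$ meets every $\ci$-positive Borel subset of $D$ in an $\ci$-positive set and, dually, $D\setminus C$ does the same, then $C$ is completely $\ci$-nonmeasurable in $D$. The dual half is automatic once $C$ contains no $\ci$-positive Borel set: for any Borel $B'$ with $B'\cap D\notin\ci$, were $(B'\cap D)\setminus C\in\ci$ we could cover it by a Borel $N\in\ci$, and then $(B'\cap D)\setminus N$ would be an $\ci$-positive Borel set contained in $C$, a contradiction. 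Thus the problem reduces to (i) making $C$ positive with no positive Borel subset, and (ii) finding $D$ in which $C$ is $\ci$-dense.

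For (i) I would use $non(\ci)<cov_h(\ci)$ directly. Fix an $\ci$-positive set $Y\subseteq B$ with $|Y|=non(\ci)$. Since $Y\subseteq B\subseteq HA$, each $y\in Y$ is of the form $y=g_y a_y$ with $g_y\in H$, $a_y\in A$; set $H'=\<\{g_y:y\in Y\}\>_G\le H$. Then $|H'|\le\aleph_0\cdot|Y|=non(\ci)<cov_h(\ci)$, and $Y\subseteq H'A$, so $C=H'A\notin\ci$. On the other hand $C=\bigcup_{g\in H'}gA$ is a union of fewer than $cov_h(\ci)$ translates of $A$; as each translate lies in $\ci$ (the action preserves $\ci$, so $gA\in\ci$ for every $g$), no $\ci$-positive Borel set can be contained in $C$, for otherwise that Borel set would be covered by fewer than $cov_h(\ci)$ members of $\ci$, against the definition of $cov_h(\ci)$. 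This is exactly where the gap is used: $non$ keeps $H'$, hence the number of translates, small, while $cov_h$ forbids few translates from swallowing a positive Borel set.

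For (ii) I would invoke the localization property of a $\sigma$-ideal with Borel base: choose a maximal antichain $\{Q_i\}$ of $\ci$-positive Borel sets with $C\cap Q_i\in\ci$ and put $D=X\setminus\bigcup_i Q_i$. When the antichain is countable, $\bigcup_i Q_i$ is Borel and $C\cap\bigcup_i Q_i\in\ci$, so $C\cap D\notin\ci$ (whence $D\notin\ci$), and by maximality $C$ is $\ci$-dense in $D$. Combining (i) and (ii): for every Borel $B'$ with $B'\cap D\notin\ci$, density gives $B'\cap C\notin\ci$ and the dual remark gives $B'\cap(D\setminus C)\notin\ci$, so $C=H'A$ is completely $\ci$-nonmeasurable in $D$. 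I expect the main obstacle to be step (ii) when $\ci$ is not ccc, where the antichain may be uncountable and the localization must be argued more carefully, together with the related point of securing $Y$ of size $non(\ci)$ \emph{inside} $B$ (i.e.\ the localization of $non(\ci)$ to $B$); both are unproblematic for ccc ideals and for homogeneous ideals such as $\cm$, and it is here that the hypotheses must genuinely be brought to bear. A secondary point to state explicitly is the invariance $gA\in\ci$, which is what lets the $cov_h$-counting of step (i) apply to $H'A$.
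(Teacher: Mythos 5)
Your step (i) is, almost word for word, the paper's entire proof: the authors likewise take a witness $T\subseteq B$ of $non(\ci)$, choose $F(t)\in H$ with $t\in F(t)A$, set $H'=\langle F[T]\rangle$, and conclude that $H'A\supseteq T$ is $\ci$-positive while $H'A=\bigcup\{hA:\ h\in H'\}$ is a union of fewer than $cov_h(\ci)$ sets from $\ci$, hence contains no $\ci$-positive Borel set. What you call step (ii), together with your ``dual half'' remark, is precisely what the paper omits: its proof ends with the observation that no positive Borel set can be covered by the family $\{hA:\ h\in H'\}$ and never exhibits the Borel set $D$ of the conclusion, so on this point your write-up is strictly more complete than the original (your ccc localization via a countable maximal antichain is the correct way to finish there, modulo the small point that the definition of complete nonmeasurability formally requires $H'A\cap D$, not $H'A$, to be the set considered in $D$).

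Moreover, each caveat you flag honestly is a genuine hidden assumption of the paper's argument, not a defect peculiar to yours. The inclusion $\{hA:\ h\in H'\}\subseteq\ci$ needs the action to preserve $\ci$; nothing in ``a group acting on $X$'' gives this. The witness $T$ of $non(\ci)$ must be found \emph{inside} $B$, which can fail for inhomogeneous ideals: in a disjoint sum of an ideal with small uniformity and one with large uniformity, a positive Borel $B$ in the second summand contains no positive set of size $non(\ci)$. And your non-ccc worry is exactly where the published proof genuinely breaks: take $\ci=[X]^{\le\omega}$ with $\aleph_1<2^\omega$, so that $non(\ci)=\aleph_1<cov_h(\ci)=2^\omega$, and let $G=H=(2^\omega,+)$ act on itself with $A=\{e\}$. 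The construction then produces $H'A$ of size at most $non(\ci)<2^\omega$, but no set $S$ of size less than $2^\omega$ is completely $\ci$-nonmeasurable in any uncountable Borel $D$: splitting a perfect subset of $D$ into continuum many pairwise disjoint perfect sets, one of them, say $P$, misses $S$ entirely, and then $P\cap D\notin\ci$ while $P\cap S=\emptyset\in\ci$. (The theorem's conclusion can still be rescued in this example, but only by a Bernstein-type subgroup of full size, not by the small $H'$ the proof builds.) So: your proposal matches the paper on the part the paper actually proves, correctly completes it for ccc ideals, and the obstruction you flagged is a real gap in the published proof, which as stated needs extra hypotheses (ccc-ness or homogeneity of $\ci$, plus $\ci$-invariance of the action) to go through.
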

\begin{proof} Let $B\in \scr{B}^+_I(X)$ be an $\ci$-positive Borel set such that $B\subseteq HA.$ Let us pick a set $T\subset B$ such that $T\notin \ci$ and  $|T| = non(\ci)$. Next, let $F:T\to H$ be such that  $t\in F(t)A$ holds for any $t\in T$. Let $H'=\< F[T]\>$ be the subgroup of $H$ generated by $F[T].$ Then 
$$
|H'|=|F[T]|\le|T|=non(\ci)<cov_h(\ci).
$$
We have that $T\subseteq F[T]A\subseteq H'A.$ So, $H'A\notin\ci$. Notice that $H'A=\bigcup\cf,$ where the  family $\cf=\{ hA:\; h\in H'\}\subseteq\ci$ of sets from the $\sigma$-ideal $\ci$ has size less than $cov_h(I).$ It shows that no $\ci$-positive Borel set can be covered by the family $\cf$.
\end{proof}

\section{Applications}
In investigations of topological spaces $X$ the crucial role is played by the space of all homeomorphisms $\ch(X)$ on the space $X$. $\ch(X)$ is endowed with so-called compact-open topology which is generated by the following subbase 
$$
\{ V(K,U):\;K\subseteq X\text{ is compact and }U\subseteq X\text{ is open in }X\},
$$
where
$$
V(K,U)=\{ f\in\ch:\; f[K]\subseteq U\}).
$$
In the case when $X$ is a compact metric space, $\ch(X)$ is also metrizable. A metric on $\ch(X)$ can be defined for every $f,\ g\in\ch(X)$ by the following formula
$$
 d(f,g)=\sup_{x\in X}\{ d(f(x),g(x))\} + \sup_{y\in X} d(f^{-1}(y),g^{-1}(y)).
$$
When $X$ is a compact Polish space then $\ch(X)$ is also a Polish space, see e.g. \cite{Chapman}.

Let us now consider a natural context, when a Polish group G acts on itself by left shifts. As a consequence of Theorem \ref{gg} we get the following result.

\begin{cor}
Let $(G,\cdot,J)$ be a Polish ideal group. Then there exist $H<G$ and $A\subseteq G$ such that $H,\, A,\, HA$ are completely $J$-non\-mea\-su\-ra\-ble. 
\end{cor}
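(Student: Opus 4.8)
The plan is to apply Theorem~\ref{gg} to the left-shift action of $G$ on itself. Concretely, I would set $X=G$ and take the $\sigma$-ideal on $X$ to be $\cj$, so that the Polish ideal space under consideration is $(G,\cj)$; this is a Polish ideal space precisely because $(G,\cdot,\cj)$ is a Polish ideal group. The group $G$ acts on $X=G$ via $F(g,x)=g\cdot x$. I would first check the two action axioms: $F(e,x)=e\cdot x=x$ and $F(g,F(h,x))=g\cdot(h\cdot x)=(g\cdot h)\cdot x=F(gh,x)$, both of which follow immediately from associativity and the neutral-element law in $G$.

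Next I would verify the two hypotheses of Theorem~\ref{gg}. For transitivity, fix any $x\in G$; for an arbitrary $y\in G$ we have $y=(y\cdot x^{-1})\cdot x\in Gx$, so $Gx=G=X$, and this holds for every $x$. For the stabilizer-type condition, note that for $x\ne y$ in $G$ the equation $y=g\cdot x$ has the unique solution $g=y\cdot x^{-1}$, whence $G_{x,y}=\{y x^{-1}\}$ is a singleton. Thus $|G_{x,y}|=1$ for all $x\ne y$, and the cardinal hypothesis $(\exists\lambda<2^\omega)(\forall x\ne y)\,|G_{x,y}|\le\lambda$ holds trivially with $\lambda=1$.

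With the hypotheses in place, Theorem~\ref{gg} produces a subgroup $H\le G$ and a set $A\subseteq X=G$ such that $A$ and $HA$ are completely $\cj$-nonmeasurable in $G$ (here the theorem's $\ci$ equals $\cj$, and $HA$ is exactly the product set $\{h\cdot a:\ h\in H,\ a\in A\}$ under the left-shift action). Finally, since $(G,\cj)$ is itself a Polish ideal space, the ``Moreover'' clause of Theorem~\ref{gg} applies with the theorem's $J$ taken to be $\cj$ and yields that $H$ is completely $\cj$-nonmeasurable in $G$ as well. Collecting these, $H$, $A$, and $HA$ are all completely $\cj$-nonmeasurable, which is the desired conclusion.

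I do not anticipate a genuine obstacle: the point is that the left-regular action is both transitive and free, so the quantitative hypothesis on the sets $G_{x,y}$ collapses to the trivial bound $\lambda=1$, and the identification $X=G$, $\ci=\cj$ makes the ``Moreover'' clause directly applicable. The only thing to be careful about is bookkeeping the two roles of $G$ (as acting group and as underlying space) and confirming that the abstract orbit $HA$ from the orbit definition coincides with the group-theoretic product $HA$, which it does for the left-translation action.
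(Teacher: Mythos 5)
Your proposal is correct and follows essentially the same route as the paper's own proof: apply Theorem~\ref{gg} to the left-shift action of $G$ on itself, noting transitivity ($Gx=G$) and that $G_{x,y}=\{yx^{-1}\}$ is a singleton, so the hypothesis holds with $\lambda=1$. Your additional bookkeeping (verifying the action axioms, invoking the ``Moreover'' clause with $\ci=\cj$, and identifying the orbit $HA$ with the product set) only makes explicit what the paper leaves implicit.
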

\begin{proof}
It is enough to check that the assumptions of Theorem \ref{gg} are fulfilled.

Naturally $Ge=\{ge:\ g\in G\}=G.$

Notice that if $x\neq y$, $x,y\in G$ then
$$
G_{x,y}=\{g\in G:\ y=gx\}=\{yx^{-1}\},
$$
so, it has size 1.
\end{proof}

Now, let us consider a situation when a group of homeomorphisms $\scr{H}(X)$ acts in the natural way on a compact Polish space $X.$

\begin{cor}
Assume that $cov(\cm)=cof(\cm).$ Let $X$ be a compact Polish space without isolated points. Then there exist a completely $\cm$-nonmeasurable subgroup $H\le \scr{H}(X)$ and a completely $\cm$-non\-mea\-su\-ra\-ble subset $A\subseteq X$ such that $HA$ is completely $\cm$-nonmeasurable.
\end{cor}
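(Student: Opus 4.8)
The plan is to obtain this as a direct application of Theorem~\ref{hghg}. Unlike the previous corollary, one cannot in general route through Theorem~\ref{gg}, since a compact Polish space without isolated points need not be homogeneous and $\scr{H}(X)$ may fail to act with a single orbit; Theorem~\ref{hghg}, by contrast, requires only two ideal-theoretic conditions. Accordingly I take for the group the homeomorphism group $G=\scr{H}(X)$ and let both $\cj$ and $\ci$ be the $\sigma$-ideal $\cm$ of meager sets, on $\scr{H}(X)$ and on $X$ respectively. Since $X$ is compact Polish, $\scr{H}(X)$ with the compact-open topology is itself a Polish group, as recalled above (see \cite{Chapman}); because $X$ has no isolated points it is a perfect Polish space, so $(X,\cm)$ is a Polish ideal space. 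I would likewise argue that $(\scr{H}(X),\cdot,\cm)$ is a Polish ideal group: $\cm$ always has a Borel base, $\scr{H}(X)\notin\cm$ by the Baire category theorem, and singletons lie in $\cm$ as soon as $\scr{H}(X)$ is non-discrete.

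It then remains to verify the two hypotheses of Theorem~\ref{hghg}. For condition~(1) I would invoke the two facts recalled after Definition~\ref{pis}: the cardinal coefficients attached to $\cm$ do not depend on the underlying space, and $cov(\cm)=cov_h(\cm)$. Consequently
$$
cov_h(\cj)=cov_h(\ci)=cov(\cm),\qquad cof(\ci)=cof(\cj)=cof(\cm),
$$
so the required chain of equalities collapses to the single equality $cov(\cm)=cof(\cm)$, which is exactly the standing assumption of the corollary.

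Condition~(2) is precisely the content of Proposition~\ref{meager}. Fixing $n\in\omega$ and $s\in\bbz^n$, that proposition supplies a comeager set $G'\subseteq\scr{H}(X)$ --- hence $\scr{H}(X)\setminus G'\in\cm=\cj$ --- such that for every $g\in G'$ and every $a\in G'^n$ the set $\{h\in G:\prod_{i\in n}a_i\cdot h^{s_i}=g\}$ is meager, i.e. belongs to $\cj$. With both hypotheses in place, Theorem~\ref{hghg} yields a completely $\cm$-nonmeasurable subgroup $H$ in $\scr{H}(X)$ together with a completely $\cm$-nonmeasurable $A\subseteq X$ for which $HA$ is completely $\cm$-nonmeasurable; since a completely nonmeasurable subgroup cannot be the whole group, $H<\scr{H}(X)$ is proper, as claimed.

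The step I expect to be most delicate is not the recursion (it is packaged inside Theorem~\ref{hghg}) but the verification that $(\scr{H}(X),\cdot,\cm)$ genuinely is a Polish ideal group, that is, that $\scr{H}(X)$ is non-discrete so that its singletons are meager. For the spaces one typically has in mind this is automatic, but in full generality one must exclude rigid $X$ for which $\scr{H}(X)$ degenerates to $\{e\}$; once $\scr{H}(X)$ is known to be uncountable, being a Polish group forces it to be perfect and the argument goes through unchanged.
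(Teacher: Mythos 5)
Your proposal takes essentially the same route as the paper: the paper's proof likewise consists of applying Theorem~\ref{hghg}, with condition~(2) supplied by Proposition~\ref{meager} and condition~(1) reduced, via $cov(\cm)=cov_h(\cm)$ and the space-independence of the coefficients of $\cm$, to the hypothesis $cov(\cm)=cof(\cm)$. Your closing caveat --- that for rigid compact $X$ the group $\scr{H}(X)$ may be countable or trivial, so that $(\scr{H}(X),\cdot,\cm)$ fails to be a Polish ideal group with singletons in the ideal --- is a genuine point which the paper's two-line proof silently glosses over, and your observation that non-discreteness (equivalently, uncountability, hence perfectness) of $\scr{H}(X)$ is what the argument actually needs is correct.
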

\begin{proof}
We will use Theorem \ref{hghg}. The assumptions are fulfilled, what follows from Proposition  \ref{meager} and Proposition \ref{meager_1}.
\end{proof}


On the other hand, the following example is a simple corollary of Theorem \ref{many_orbits}.
\begin{cor} Let $G$ be the subgroup of the group of all isometries on the Cantor space $2^\omega$ defined as follows
$$
G=\{ T_X:\; X\in \scr{P}(\{ n\in\omega:\; n\equiv 0\mod 2\})\},
$$
where for any $x\in 2^\omega$ and $n\in\omega$
$$
T_X(x)(n)=\begin{cases}
x(n)    &\text{when }n\notin X\\
1-x(n)  &\text{when }n\in X.
\end{cases}
$$
Then there is a subgroup $H$ of $G$ and uncountable many pairwise disjoint subsets $\{ A_\alpha\subset 2^\omega:\; \alpha< cof(\cm)\}$ such that $HA_\alpha$ are completely $\cm$-nonmeasurable in the Cantor space $2^\omega$ for any $\alpha < cof(\cm)$. Moreover,  $\{ HA_\alpha:\alpha< cof(\cm)\}$ forms a pairwise disjoint family of subsets of the Cantor space.
\end{cor}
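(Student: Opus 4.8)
The plan is to verify that the hypotheses of Theorem \ref{many_orbits} hold for the Polish ideal space $(2^\omega,\cm)$ together with the group $G$ acting by $T_X\cdot x=T_X(x)$; the conclusion of that theorem then yields the subgroup $H$, the pairwise disjoint family $\{A_\alpha:\alpha<cof(\cm)\}$, and the disjointness of the orbits $HA_\alpha$ verbatim. (Note that only the first, topology-free part of Theorem \ref{many_orbits} is needed, since the corollary makes no claim about a $\sigma$-ideal on $G$.) First I would record the routine points. The Cantor space $2^\omega$ is an uncountable Polish space without isolated points, so $(2^\omega,\cm)$ is a Polish ideal space by the remark following Definition \ref{pis}. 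Moreover $G$ is genuinely a group acting on $2^\omega$: a direct coordinatewise computation gives $T_X\circ T_Y=T_{X\triangle Y}$, where $X\triangle Y$ is again a set of even integers, so $G$ is an abelian subgroup of the isometry group with $T_\emptyset=\mathrm{id}$ and $T_X^{-1}=T_X$, and the two action axioms follow immediately.

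Next I would identify the orbits. Writing $E$ and $O$ for the even and odd integers, each $T_X$ with $X\subseteq E$ fixes every odd coordinate of $x$, while as $X$ ranges over $\mathscr{P}(E)$ it can realise an arbitrary pattern on the even coordinates. Hence the orbit of $x$ is exactly $\{y\in 2^\omega:\ y\restrict O=x\restrict O\}$, and two points lie in the same orbit if and only if they agree on all odd coordinates. Consequently, for any set $B$ the number of orbits meeting $B$ equals the cardinality of the image $\pi(B)$, where $\pi:2^\omega\to 2^O$ is the restriction map to the odd coordinates. Thus the orbit-counting hypothesis of Theorem \ref{many_orbits} reduces to the single assertion that $cof(\cm)\le|\pi(B)|$ for every $\cm$-positive Borel set $B$.

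The main obstacle is precisely this cardinality estimate, which I would settle using the Kuratowski-Ulam theorem. Identifying $2^\omega$ with $2^E\times 2^O$, suppose $B\in\scr{B}^+_{\cm}(2^\omega)$ is non-meager. By Kuratowski-Ulam the set $\{z\in 2^O:\ B^z\text{ is non-meager in }2^E\}$ is non-meager, and since it is contained in $\pi(B)$, the projection $\pi(B)$ is non-meager as well. Now $\pi(B)$ is analytic, hence has the Baire property; being non-meager, it is therefore comeager in some nonempty clopen set $U\subseteq 2^O$, so it contains a dense $G_\delta$ subset of $U$. Such a set contains a homeomorphic copy of the Cantor space and hence has cardinality $2^\omega$. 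Therefore $|\pi(B)|=2^\omega\ge cof(\cm)$, which is exactly the required inequality.

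With the hypothesis verified, Theorem \ref{many_orbits} applied to $(2^\omega,\cm)$ and $G$ produces a subgroup $H\le G$ and a pairwise disjoint family $\{A_\alpha:\alpha<cof(\cm)\}$ such that each $HA_\alpha$ is completely $\cm$-nonmeasurable in $2^\omega$ and the family $\{HA_\alpha:\alpha<cof(\cm)\}$ is pairwise disjoint. Since $cof(\cm)$ is uncountable, this is precisely the assertion of the corollary. I expect the only genuinely non-formal step to be the projection estimate through Kuratowski-Ulam; the identification of the orbits and the group structure of $G$ are straightforward bookkeeping.
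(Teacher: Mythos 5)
Your proposal is correct and takes essentially the same route as the paper, which derives this corollary directly from Theorem \ref{many_orbits}: your identification of the orbits as the fibers of the projection $\pi$ onto the odd coordinates, together with the Kuratowski--Ulam argument showing $|\pi(B)|=2^\omega\ge cof(\cm)$ for every non-meager Borel $B$, supplies exactly the orbit-counting hypothesis that the paper leaves to the reader. You also correctly note that only the first, topology-free part of Theorem \ref{many_orbits} is needed, so no Polish structure on $G$ has to be checked.
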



\end{document}